\documentclass[final]{dmtcs-episciences}

\usepackage[utf8]{inputenc}
\usepackage{subfigure}

\usepackage[english]{babel}
\usepackage{amssymb, latexsym, amsthm, amsmath, verbatim}
\usepackage{graphicx}
\usepackage{enumerate}
\usepackage[all,cmtip]{xy}
\usepackage{color}
\usepackage{float}
\usepackage{tikz}
\usetikzlibrary{arrows}
\restylefloat{table}
\usepackage{hyperref}

\theoremstyle{theorem}
\newtheorem{thm}{Theorem}[section]

\newtheorem{cor}[thm]{Corollary}
\newtheorem{prop}[thm]{\textbf Proposition}

\theoremstyle{definition}
\newtheorem{defn}[thm]{Definition}

\numberwithin{equation}{section}

\theoremstyle{remark}

\newtheorem{rem}[thm]{\textbf{Remark}}

\newcommand{\abs}[1]{\left| #1\right|}
\newcommand{\pez}[1]{\left(#1\right)}



\usepackage[T1]{fontenc}
\usepackage{babel}

\author{Christopher Coscia\affiliationmark{1}\thanks{Supported by NSF Grant 1263009}
	\and Jonathan DeWitt\affiliationmark{2}\thanks{Supported by NSF Grant 1263009}
	\and Fan Yang\affiliationmark{3}
	\and Yiguang Zhang\affiliationmark{4}\thanks{Supported by Grant No. 14-12 from the Acheson J. Duncan Fund for the Advancement of Research in Statistics at The Johns Hopkins University}}

\title{Best and Worst Case Permutations for Random Online Domination of the Path \thanks{This research began during the Summer 2014 REU at East Tennessee State University.}}

\affiliation{
	Department of Mathematics, Dartmouth College \\
	Department of Mathematics and Statistics, Haverford College \\
	Department of Mathematical Sciences, Carnegie Mellon University \\
	Department of Statistics, Stanford University}

\keywords{permutations, graph domination, random algorithms, path graphs, asymptotics.}

\received{2017-04-21}
\revised{2017-10-6}
\accepted{2017-11-21}

%
%
%


\begin{document}

\publicationdetails{19}{2017}{2}{2}{3278}
\maketitle
\begin{abstract}

We study a randomized algorithm for graph domination, by which, according to a uniformly chosen permutation, vertices are revealed and added to the dominating set if not already dominated. We determine the expected size of the dominating set produced by the algorithm for the path graph $P_n$ and use this to derive the expected size for some related families of graphs. We then provide a much-refined analysis of the worst and best cases of this algorithm on $P_n$ and enumerate the permutations for which the algorithm has the worst-possible performance and best-possible performance. The case of dominating the path graph has connections to previous work of Bouwer and Star, and of Gessel on greedily coloring the path.

\end{abstract}

\section{Introduction}

In this paper, we consider an online algorithm for graph domination, which was introduced in \cite{nikoletseas1994near}. The algorithm is as follows: Let $G$ be a graph on $n$ vertices. Randomly label the vertices with distinct labels $1$ through $n$, and let $v_i$ be the vertex labeled with the number $i$. Let $V_i$ equal $\bigcup_{j=1}^i \{v_j\}$. After $v_i$ is ``revealed,'' we see the entirety of the vertex-induced subgraph $G[V_i]$.  When $G[V_i]$ is revealed, if $v_i$ does not have a neighbor in the dominating set $D$, we add it to $D$. We repeat this procedure until the entire graph has been revealed, and as a result, $D$ is an independent dominating set for $G$. In other words, we reveal the vertices in random order, and, if the revealed vertex is not yet dominated, we add it to the dominating set.

More rigorously, for a graph $G$ on $n$ vertices, the algorithm uses a uniformly random permutation $\pi \in S_n$ to construct a sequence of independent sets $\emptyset = D_0(\pi) \subseteq D_1(\pi) \subseteq D_2(\pi) \subseteq \cdots \subseteq D_n(\pi)$ where $D_n(\pi)$ is an independent \emph{dominating} set for $G$, and returns $D = D_n(\pi)$.  In the $i$th step, put $D_i(\pi) = D_{i-1}(\pi)$ if $v_{\pi_i}$ neighbors any vertex in $D_{i-1}(\pi)$, or $D_i(\pi) = D_{i-1}(\pi) \cup \{v_{\pi_i}\}$ if not.  By construction, every vertex will either be in $D$ or will neighbor a vertex in $D$, but never both, so $D$ is both an independent set and a dominating set for $G$.

For an example of an application of the randomized algorithm described above, consider a street lined with equally-spaced streetlights. The streetlights awaken at night asynchronously and decide whether or not they must illuminate their surroundings. When a streetlight awakens, it senses whether or not its position is already illuminated by another streetlight. If it is not illuminated, the streetlight begins to shine its light; otherwise the light stays off. The expected number of lights on once all of the streetlights awaken is equal to the expectation of the expected size of the dominating set for the path that we compute in this paper, assuming that each streetlight may illuminate those lights to its immediate left and right.  To save energy in a situation such as this, it is preferable that fewer lights are required.

In \cite{nikoletseas1994near}, it is shown that this algorithm is near optimal for dense random graphs, in the sense that $\mathbb{E}|D|$ is close to the domination number of $G$. Using a similar procedure, it is shown in \cite{wieland2001domination} that the domination number of a random graph is concentrated on two numbers. In this paper we study the complementary problem of how well this algorithm performs for specific families of non-dense graphs. We begin by studying the performance of this algorithm on the path on $n$ vertices, $P_n$. We are then able to leverage this information to learn about some other families of graphs. The result of this investigation demonstrates that the algorithm does not perform as well on these sparse graphs as it does on the dense graphs on which it has previously been applied.

Throughout this paper we will use $G$ to denote a graph. 

\begin{defn}
Let $G$ be a graph with vertex set $V$.  We say that $D \subseteq V$ is a \emph{dominating set} for $G$ if its neighborhood is all of $V$: for all $v \in V$, $v \in D$ or $v$ is adjacent to some vertex $w \in V$.
\end{defn}

The following notation will be useful:

\begin{defn}
Let $G$ be a graph with vertex set $\{1, 2, \ldots, n\}$.  Let $\pi \in S_n$ and say that the vertices of $G$ are revealed in the order $\pi_1, \pi_2, \ldots, \pi_n$.  Define $\Gamma(\pi)$ to be the resulting dominating set under the algorithm described above, and let $\gamma(\pi) = |\Gamma(\pi)|$.  We then define the \emph{expected online domination number} of $G$ to be $\gamma_o(G) = \mathbb{E}(\gamma(\pi))$, i.e., the expected size of the dominating set created when our algorithm is run on the graph, with $\pi$ chosen from $S_n$ uniformly at random.
\end{defn}

After determining $\gamma_o(P_n)$ in Section \ref{pathsec}, we then consider in Section \ref{enumsec} the orders, realized by permutations, in which the vertices may be revealed that maximize the size of the dominating set, and enumerate these worst-case permutations. This enumerative work leads to connections between this problem and the work of Bouwer and Star \cite{bouwer1988question} and of Gessel \cite{gessel1991coloring}, which studied cases of greedy colorings of $P_n$ in which only two colors are required; in particular, they enumerate the best-case permutations of path-coloring.  We show that when the length of the path is odd, that the best-case behavior for the path-coloring problem coincides with the worst-case behavior for the domination problem, and when the length of the path is even, that these two problems differ.  We enumerate the number of permutations in the worst case.  We end by enumerating the permutations in which the algorithm gives best-case performance.

The connection between algorithms and permutations has been studied in the past; for instance, it is well-known that stack-sortable permutations are those that avoid the permutation pattern $231$ (\cite{knuth69}). Other connections between sorting procedures and permutation patterns are described in B\'ona's survey \cite{bona2002survey}. In particular, the problem of enumerating the permutations giving best-case and worst-case behavior have also been considered. Moreover, in any satisfying worst-case analysis of an algorithm it is essential to demonstrate that a bound is tight, and so particular instances of worst-case permutations have been studied. It is interesting to know what these permutations look like, especially when they are highly structured. For instance, this has been studied by Elizalde and Winkler in the case of ``homing'' sorting, for which they obtain the upper bound on the worst case of the algorithm and then demonstrate that there are super-exponentially many permutations that obtain this worst case~\cite{elizalde2009sorting}. However, they leave the exact enumeration of this number as an open problem. As an example of a similar problem, we may consider the problem of enumerating extremal Erd\H{o}s-Szekeres permutations, which are those permutations of $\{1,...,n^2\}$ not containing a monotone subsequence of length $n+1$; this enumeration was completed by Romik in \cite{romik2006permutations}.

\section{A Study of $P_n$}\label{pathsec}

In particular, we are interested in computing the asymptotic behavior of $\gamma_o(P_n)$ where $P_n$ is the path on $n$ vertices. For simplicity, we define $\gamma_o(P_n)=0$ for $n\le 0$. For ease of notation, we define $P_n=([n],\{(i,i+1)\mid i\in [n-1]\})$, so that our vertices come pre-labeled, where $[n]$ denotes the set $\{1,...,n\}$. 

\begin{thm}
\[
\lim_{n\rightarrow\infty} \frac{\gamma_o(P_n)}{n}=\frac{e^2-1}{2e^2}\approx .4323...
\]
\end{thm}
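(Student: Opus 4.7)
The plan is to set up a recursion for $a_n := \gamma_o(P_n)$ by conditioning on the first-revealed vertex, convert it to an ODE for the ordinary generating function, solve that ODE in closed form, and read off the linear growth rate.

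First, I condition on which vertex $k \in [n]$ is revealed first (that is, $\pi_1 = k$), an event of probability $1/n$. Since $k$ has no revealed neighbor at that moment, $k \in D$; the vertices $k-1$ and $k+1$, when they exist, are henceforth dominated by $k$, so neither ever enters $D$. The key observation is that for any $v \in \{1,\dots,k-2\}$, the only neighbor of $v$ outside this block is $k-1$, which never lies in $D$; hence running the algorithm on the remaining reveal order produces, on the left block, the same decisions as a fresh run on $P_{k-2}$, and similarly on the right block $\{k+2,\dots,n\}$. Conditional on $\pi_1=k$, the induced reveal orders on the two blocks are uniformly random, so writing $a_m=0$ for $m\le 0$ and summing over $k$,
\[
n a_n \;=\; n + \sum_{k=1}^{n}\bigl(a_{k-2}+a_{n-k-1}\bigr) \;=\; n + 2\sum_{j=0}^{n-2} a_j.
\]

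Subtracting the analogous identity for $n-1$ gives the three-term recurrence $(n+1)a_{n+1}= n a_n+2a_{n-1}+1$, valid for $n\ge 0$ with $a_{-1}=a_0=0$. Multiplying by $x^n$ and summing converts this into a first-order linear ODE for the ordinary generating function $B(x):=\sum_{n\ge 0}a_n x^n$, namely
\[
B'(x) - \frac{2x}{1-x}B(x) \;=\; \frac{1}{(1-x)^2}.
\]
The integrating factor $\mu(x)=e^{2x}(1-x)^2$ reduces this to $(\mu B)'=e^{2x}$, and the initial condition $B(0)=0$ gives the clean closed form
\[
B(x) \;=\; \frac{1-e^{-2x}}{2(1-x)^2}.
\]

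To extract the asymptotics, I split $B(x)=\frac{1}{2(1-x)^2}-\frac{e^{-2x}}{2(1-x)^2}$ and convolve $e^{-2x}=\sum_k(-2)^k x^k/k!$ with $(1-x)^{-2}=\sum_n(n+1)x^n$, giving
\[
a_n \;=\; \frac{n+1}{2} - \frac{1}{2}\sum_{k=0}^{n}\frac{(-2)^k}{k!}(n-k+1).
\]
The inner sum equals $(n+1)\sum_{k=0}^{n}(-2)^k/k! - \sum_{k=0}^{n}k(-2)^k/k!$, and both partial sums converge super-exponentially fast to their infinite counterparts $e^{-2}$ and $-2e^{-2}$. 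Dividing by $n$ and letting $n\to\infty$ yields the stated limit $\frac{1-e^{-2}}{2}=\frac{e^2-1}{2e^2}$.

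The principal obstacle is the decomposition argument used to derive the initial recursion: one must carefully verify that the two blocks $\{1,\dots,k-2\}$ and $\{k+2,\dots,n\}$ behave as independent subproblems — the vertices $k\pm 1$ are \emph{invisible} to all subsequent decisions because they never enter $D$, and the induced reveal orders on each block are genuinely uniform. Once this is established, solving the ODE and extracting the coefficients are routine computations.
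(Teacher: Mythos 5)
Your proposal is correct and follows essentially the same route as the paper: the same conditioning on the first-revealed vertex yields the recurrence $a_n = 1 + \tfrac{2}{n}\sum_{j\le n-2} a_j$, the same first-order ODE for the ordinary generating function with solution $\tfrac{1-e^{-2x}}{2(1-x)^2}$, and the same coefficient extraction giving the limit $\tfrac12 - \tfrac{1}{2e^2}$. The only cosmetic difference is that you pass through the three-term recurrence $(n+1)a_{n+1}=na_n+2a_{n-1}+1$ before forming the ODE, whereas the paper multiplies the original recurrence by $nx^n$ directly.
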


\begin{proof}
We first claim that

\begin{equation}\label{eq:rec}
\gamma_o(P_n)=1+\frac{2}{n}\left(\sum_{i=1}^{n-2} \gamma_o(P_i)\right).
\end{equation}

Suppose that the first vertex revealed in the online procedure is vertex $i$. Then $i$ must enter the dominating set. As $i$ is in the dominating set, the vertices $i-1$ and $i+1$ are already dominated, whereas $i-2$ and $i+2$ are not. Thus in order to finish dominating $P_n$ we must separately dominate the two remaining subgraphs, which are isomorphic to $P_{i-2}$ and $P_{n-i-1}$. In particular, if $i=1$ or $n$, one of these paths is empty. Next, note that when we consider each possible $i\in [n]$, there are two instances in which we must dominate $P_j$ for $j\in [n-2]$ -- once if we choose $j+2$ as the first vertex and also if we choose $n-j-1$ as the first vertex. Then as each $i$ is equally likely, the permutation induced on the subgraphs $P_{i-2}$ and $P_{n-i-1}$ is chosen uniformly at random, we sum over these possible $j$, multiply by 2 to account for the two instances for which we dominate each $P_j$, and divide by $n$, giving the formula above.

Now let $F(x)=\sum_{n\ge 0} \gamma_o(P_n)x^n$. We multiply \ref{eq:rec} by $nx^n$ and sum over $n\ge 0$ to find
\[
xF'(x)=\frac{x}{(x-1)^2}+2F(x)\frac{x^2}{1-x}.
\]
Cancelling the $x$, we have the equation
\[
F'(x)=\frac{1}{(x-1)^2}+2F(x)\frac{x}{1-x}
\]
and by solving the differential equation, we have
\[
F(x)=Ce^{-2(x+\log(x-1))}+\frac{1}{2(x-1)^2},
\]
which then allows us to evaluate the function and determine that the constant is $-1/2$. We now wish to determine the coefficients of $F$, which has the form
\[
F(x)=\frac{-1}{2}e^{-2x}\frac{1}{(x-1)^2}+\frac{1}{2(x-1)^2}.
\]
We already know that $n+1$ is the coefficient of $x^n$ in $\frac{1}{(x-1)^2}$, so it will suffice to determine $g_n$, the coefficient of $x^n$ in $e^{-2x}\frac{1}{(x-1)^2}$. To this end, note that
\[
e^{-2x}\frac{1}{(x-1)^2}=\left(\sum_{i=0}^n\frac{(-2)^i}{i!}x^i\right)\left(\sum_{j=0}^n (j+1)x^j\right).
\]
Collecting the $x^n$ terms in the product we have
\[
g_n=\sum_{j=0}^n(n+1-j)\frac{(-2)^j}{j!}
\]
and $\gamma_o(P_n)=\frac{-1}{2}g_n+\frac{n+1}{2}$, thus
\[
\gamma_o(P_n)=\frac{-1}{2}\left(\sum_{j=0}^n(n+1-j)\frac{(-2)^j}{j!}\right)+\frac{n+1}{2}=\frac{-(n+3)}{2}\sum_{j=0}^n\frac{(-2)^j}{j!}+\frac{n+1}{2}.
\]
We are interested in the asymptotic behavior of $\gamma_o(P_n)$, so we consider

\[
\lim_{n\rightarrow\infty} \gamma_o(P_n)/n = \lim_{n\rightarrow\infty} \left(\frac{-(n+3)}{2n}\left(\sum_{j=0}^{n} \frac{(-2)^j}{j!}\right)+\frac{n+1}{2n}\right)
\]
We can then evaluate the limit and recognize the power series as $e^{-2}$ to conclude
\[
\lim_{n\rightarrow\infty} \frac{\gamma_o(P_n)}{n}=\frac{1}{2}-\frac{1}{2e^2}.
\]\end{proof}

Let $C_n$ be the cycle graph on $n$ vertices and $H_n$ the wheel graph with $n$ spokes.  From our analysis of the path graph $P_n$, we can deduce results for these related families of graphs.

\begin{cor}
$\gamma_o(C_n)=1+\gamma_o(P_{n-3})$.
\end{cor}

\begin{proof}
After we add one vertex to the dominating set, the undominated vertices form the graph $P_{n-3}$.
\end{proof}

\begin{cor}
$\gamma_o(H_n)=\frac{1}{n+1}+\frac{n}{n+1}\gamma_o(P_{n-3})$.
\end{cor}

\begin{proof}
If the center is revealed first, every vertex is dominated; otherwise, the center will not be included in the dominating set.  Since the remaining vertices form a path, the result follows.
\end{proof}

Other families of graphs are similarly easy to analyze.  Consider $\star_n$, the star graph with $n$ leaves.  If a leaf is revealed first, then every other leaf must be in the dominating set, and if the center is revealed first, it dominates every other vertex, so 
\[
\gamma_o(\star_n)=\frac{n}{n+1}n+\frac{1}{n+1}=\frac{n^2+1}{n+1}=\Theta(n).
\]

For $K_{\{p_i\}}$ the complete multipartite graph with partitions of size $p_i$, note that once one vertex is placed into the dominating set, all vertices outside that block of the partition are dominated and thus will never be in the dominating set. Thus the remaining vertices in the partition must be added to a dominating set. Thus to find $\gamma_o(K_{\{p_i\}})$, we simply weight by the size of each partition, and so
\[
\gamma_o(K_{\{p_i\}})=\frac{\sum p_i^2}{\sum p_i}.
\]

\begin{rem}
Note that we can obtain a lower bound on these quantities by applying the Caro-Wei bound \cite{caro1979,wei1981}, which applies to a general graph $G$. Let $\pi$ be a permutation of the vertices of $G$ taken uniformly at random, which gives the sequence in which the vertices are revealed. For $v\in V$ define $d^+(v)$ to be the number neighbors of $v$ appearing after $v$ in $\pi$. Clearly we have that $d^+(v)$ is uniformly distributed on $\{0,...,d(v)\}$. Hence by linearity of expectations the expected number of vertices such that $d^+(v)=d(v)$ is $C=\sum_{v\in V}1/(1+d(v))$. Vertices with $d^+(v)=d(v)$ are included in our dominating set, hence $C$ is a lower bound on $\gamma_0(G)$. This gives the bound $\gamma_0(P_n)\ge (n+1)/3$ for the path. 
\end{rem}

\section{Worst Case Permutations}\label{enumsec}

In this section we consider the orders in which vertices may be revealed that maximize the number of vertices included in the dominating set of the graph of $P_n$. When $n$ is even, at most $n/2$ vertices may be included in the dominating set as no two vertices in the dominating set may be adjacent. Further, the permutation $135...246...$ achieves this number.  As any dominating set created by our algorithm is an independent dominating set, and each such dominating set can be achieved (for example, by simply listing those dominating vertices first in the permutation), the number of distinct worst-case dominating sets is equal to the number of maximal independent dominating sets of the graph.

 \begin{prop}\label{prop:maxindsets}
Let $k$ be a positive integer.  If $n = 2k-1$ is odd, there is only one maximal independent dominating set of $P_n$. Moreover, this unique set consists of all odd-numbered vertices.

If $n = 2k$ is even, then the number of maximal independent dominating sets of $P_n$ is equal to $k+1 = n/2+1$.  Moreover, each maximal independent dominating set is either the set of even vertices, the set of odd vertices, or a set of the form $\{1, 3, 5, \ldots, 2j-1, 2j+2, 2j+4, \ldots, n\}$ for some $j \in \{2, 3, \ldots, k-1\}$.
\end{prop}

\begin{proof}
First suppose that the length of the path is $2k-1$. The vertex $1$ must always be in the independent dominating set. If it were not, then $2$ must be in the independent set in order for it to be maximal in size. In this case $3$ cannot be in the set, and so finding the largest independent set on the remaining vertices reduces to considering the path on the vertices $\{4,5,...,2k-1\}$. This new path is of length $2k-2$. Because this is a worst-case permutation, there must be $k$ vertices in the independent set, and so there remain $k-1$ vertices to be added to the independent set. However, at most $k-2$ vertices can be placed on the remaining path.  Thus $1$ must be in the set and inductively every odd vertex in the path must be in the independent set.  We conclude that there is a unique maximal independent set of $P_{2k-1}$.

We now consider the even case, in which the path has $2k$ vertices. Note that if there is no pair of consecutive vertices that are both not in the dominating set, then the dominating set will consist of vertices of a common parity. Otherwise, there is at least one such pair; first suppose there is exactly one, $i+1$ and $i+2$. We claim that $i$ must be odd. If $i$ were even, the remaining undominated entries $1,2,...,i-2$ would form a path of even length, whereas $i+5,i+6,...,2k$ would form a path of odd length. In particular, these paths can accomodate at most $i/2-1$ and $k-i/2-2$ vertices, respectively, in their dominating sets. This sums to $k-3$ vertices. As we have added two vertices to the dominating set already, we will only manage to choose $k-1$ vertices; however, $k$ vertices are required for the set to be maximal, so this is impossible. One may then check that adding every other vertex except leaving an additional gap between $i$ and $i+3$ gives a maximal set, provided that $i$ is odd. We now claim that there can be at most one such pair of consecutive non-dominated vertices. If there were two gaps, then we could take the rightmost vertex of the leftmost gap, and the leftmost vertex of the rightmost gap, and translate those two vertices and all those vertices between them one position to the left. Then there is a vacant spot next to where the rightmost vertex that was moved originated, which we can add to the dominating set, and thus the independent set was not maximal.
\end{proof}

Now that we have determined the possible worst-case configurations, we study which permutations correspond to these configurations. We begin here with the study of the single configuration when $n$ is odd; this is the dominating set $\{1,3,5,...,n\}$. Let $F_n$ be the set of permutations which achieve the worst-case bound. The following table summarizes the values of $\abs{F_n}$ for small values of $n$ and was generated by explicitly testing each permutation in $S_n$. In the case $n=3$ the four permutations in $F_n$ are $123, 132,312,321$, i.e. those that do not start with $2$.

\[
\begin{tabular}{|c|c|}
\hline $n$& $\abs{F_n}$\\ \hline
1 & 1\\
2& 2\\
3&4\\
4& 24\\
5& 56\\
6&640\\
7&1632\\
8& 30464\\
9& 81664\\
10 & 2251008\\
11 & 6241280\\ \hline
\end{tabular}
\]

\begin{prop}\label{prop:oddrecurrence}
Let $f(n) = |F_n|$.  For $n$ odd, $f(n)$ satisfies the recurrence relation

\[ f(n) = 2(n-1)f(n-2) + (n-1)(n-2)\sum_{i=3 \\ \text{, $i$ odd }}^{n-2} \binom{n-3}{i-2}f(i-2)f(n-i-1)\]

or equivalently,

\[ f(2k+1) = 4kf(2k-1) + (4k^2-2k) \sum_{j=1}^{k-1} \binom{2k-2}{2j-1} f(2j-1)f(2k-2j-1). \]

In particular, it is possible to enumerate $|F_n|$ for $n$ odd without knowing any values for $n$ even.
\end{prop}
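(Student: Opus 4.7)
The plan is to partition $F_n$ according to the value of $\pi_1$, the first vertex revealed. Since $\pi_1$ is always added to the dominating set (it has no revealed neighbors at the time) and since, by Proposition \ref{prop:maxindsets}, the unique worst-case dominating set for $P_n$ with $n$ odd is $\{1,3,5,\ldots,n\}$, it follows that $\pi_1$ must be odd. I would then analyze separately the cases $\pi_1 \in \{1,n\}$ (the endpoints) and $\pi_1 = i$ for odd $i$ with $3 \le i \le n-2$ (an interior odd vertex). The key structural observation, used in both cases, is a \emph{decoupling principle}: whenever a vertex $v$ enters $D$, its neighbors are permanently dominated, can never themselves enter $D$, and therefore cannot dominate any vertices further out. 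In particular, their reveal times become algorithmically irrelevant to the status of the remaining vertices.

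For $\pi_1 = 1$, vertex $2$ is permanently dominated, so by decoupling the algorithm restricted to $\{3,\ldots,n\}$ proceeds exactly as the algorithm on $P_{n-2}$, and the reveal time of vertex $2$ among the remaining $n-1$ slots is irrelevant. To achieve the worst-case $D$, the subsequence on $\{3,\ldots,n\}$ must be a worst-case permutation of $P_{n-2}$, yielding $(n-1)f(n-2)$ permutations. The symmetric case $\pi_1 = n$ contributes another $(n-1)f(n-2)$, so the endpoints contribute $2(n-1)f(n-2)$ in total.

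For $\pi_1 = i$ with $i$ odd and $3 \le i \le n-2$, vertex $i$ dominates both $i-1$ and $i+1$, which are then irrelevant to the algorithm's future behavior. The path splits into two independent subpaths $P_{i-2}$ on $\{1,\ldots,i-2\}$ and $P_{n-i-1}$ on $\{i+2,\ldots,n\}$, each of odd length, whose induced subsequences must be worst-case permutations, contributing a factor of $f(i-2)f(n-i-1)$. The positions of the two free vertices $i-1$, $i+1$ and the two subpath blocks interleave arbitrarily among the $n-1$ slots after $\pi_1$, giving
\[
\binom{n-1}{\,i-2,\; n-i-1,\; 1,\; 1\,} \;=\; \frac{(n-1)!}{(i-2)!\,(n-i-1)!} \;=\; (n-1)(n-2)\binom{n-3}{i-2}
\]
interleavings. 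Summing over odd $i \in [3,n-2]$ and combining with the endpoint contribution reproduces the stated recurrence; the second form follows immediately by substituting $i = 2j+1$.

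The main obstacle is rigorously establishing the decoupling claim: one must argue that the reveal times of $i-1$ and $i+1$ (which are dominated but not in $D$) have no influence on whether any later revealed vertex joins $D$. This is essentially immediate from the rule that a vertex is added precisely when none of its already-revealed neighbors is in $D$ — since $i-1$ and $i+1$ will never be in $D$, they cannot tip this condition — but it should be stated explicitly so that the multinomial factor is clearly justified and the two subpath algorithms are seen to run independently in the full product sense.
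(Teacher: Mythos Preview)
Your proposal is correct and follows essentially the same approach as the paper: partition $F_n$ by the first revealed vertex $\pi_1$, handle the two endpoint cases to obtain $2(n-1)f(n-2)$, and for each interior odd $i$ split into two odd subpaths with the neighbors $i\pm1$ placed freely to obtain the $(n-1)(n-2)\binom{n-3}{i-2}f(i-2)f(n-i-1)$ term. Your explicit articulation of the decoupling principle and the multinomial count is slightly more detailed than the paper's version, but the argument is the same.
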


\begin{proof}
Let $f_i(n) = |\{ \pi \in S_n : \pi \in F_n \text{ and } \pi(1) = i \}|$.   Then $f(n) = \sum_{i=1}^n f_i(n)$ and $f_j(n) = 0$ when $j$ is even.  When $i = 1$ or $i = n = 2k+1$, the first vertex added is an endpoint of the path.  It then remains to dominate $2k-1$ more vertices, and the label corresponding to neighbor of the endpoint of the path may appear anywhere in the permutation (as it will never be placed into the dominating set).  The number of permutations that give rise to a maximal dominating set on the $2k-1$ vertices is $f(2k-1)$, and there are $2k$ possible positions for the neighboring vertex, so $f_1(2k+1) = f_{2k+1}(2k+1) = 2kf(2k-1)$.

For $i$ odd and not 1 or $2k+1$, choosing the $i$th vertex as the first to add splits the path into two parts of odd length.  The first $i-2$ vertices must be dominated, and the final $n-i-1$ vertices must be dominated, while the $(i-1)$st and $(i+1)$st vertices may appear anywhere in the permutation as they will not be in the dominating set.  Thus we must place an ordering on these two sets of vertices and interleave them in any way we wish; there are $\binom{n-3}{i-2}f(i-2)f(n-i-1)$ ways to do this.  We must then choose positions in the permutation for the two neighbors of the first vertex chosen, so in this case $f_i(n) = (n-1)(n-2)\binom{n-3}{i-2}f(i-2)f(n-i-1)$.
\end{proof}

It turns out that the subsequence $\{f(2k+1)\}_{k \in \mathbb{N}}$ has been encountered in conjunction with weakly alternating permutations.  We say that a permutation $\pi \in S_n$ is \emph{weakly alternating} if for every even index $i$ we have either $\pi_{i-1}<\pi_i$ or $\pi_{i+1}<\pi_i$, i.e. there is a weak peak at $i$. Let $W_n$ be the set of weakly alternating permutations of order $n$.

\begin{prop}\label{prop:oddcase}
We have $\abs{F_n}=\abs{W_n}$ for odd $n$. In particular, the map that sends $\pi\mapsto \pi^{-1}$ is a bijection from $F_n$ into $W_n$ for $n$ odd.

\end{prop}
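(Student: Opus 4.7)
The plan is to translate the defining condition of $F_n$ into an explicit positional condition on the reveal order $\pi$, and then observe that this positional condition becomes the weakly alternating condition once expressed in terms of $\sigma := \pi^{-1}$.

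First I would use Proposition \ref{prop:maxindsets} to pin down exactly what $\pi \in F_n$ means: for odd $n$, the unique maximal independent dominating set is $\{1,3,5,\ldots,n\}$, so $\pi \in F_n$ iff the algorithm, when run in the order $\pi$, produces precisely this set. Since no even vertex can end up in $D$, the neighbors of any odd vertex are never in $D$ when that odd vertex is revealed, so every odd vertex is automatically added. Thus the only nontrivial requirement is that no even vertex $2k$ is ever added. This fails exactly when $2k$ is already dominated at the moment it is revealed, and since odd vertices are added on sight, that is equivalent to at least one of $2k-1, 2k+1$ being revealed strictly before $2k$. Hence $\pi \in F_n$ iff for every even vertex $2k \in \{2,4,\ldots,n-1\}$, one of $2k-1, 2k+1$ comes before $2k$ in $\pi$.

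Next I would translate this condition to $\sigma = \pi^{-1}$. The value $\sigma_v$ records the time at which vertex $v$ is revealed, so the relation ``$u$ is revealed before $v$'' becomes $\sigma_u < \sigma_v$. The characterization becomes: for every even $i \in \{2,4,\ldots,n-1\}$, either $\sigma_{i-1} < \sigma_i$ or $\sigma_{i+1} < \sigma_i$. Since $n$ is odd, $\{2,4,\ldots,n-1\}$ is exactly the set of even indices in $[n]$, so this is precisely the defining condition of $W_n$ applied to $\sigma$. Hence $\pi \in F_n$ iff $\pi^{-1} \in W_n$, and since inversion is an involution on $S_n$ it restricts to a bijection $F_n \to W_n$, giving $|F_n| = |W_n|$.

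I do not anticipate a genuine obstacle: the argument is a careful translation rather than an involved combinatorial construction. The one point demanding care is the boundary, where I rely on $n$ being odd so that the largest even index $n-1$ still has a well-defined successor $n$, and no vacuous or out-of-range condition appears. Everything else is routine bookkeeping.
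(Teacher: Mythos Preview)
Your proposal is correct and follows essentially the same route as the paper: both arguments characterize $F_n$ (for odd $n$) by the condition that for every even vertex $i$ one of $i-1,i+1$ precedes $i$ in $\pi$, and then observe that under $\sigma=\pi^{-1}$ this becomes exactly the weak-peak condition defining $W_n$. Your framing of $\sigma_v$ as the reveal time of vertex $v$ and your explicit remark about the boundary (that $n$ odd ensures every even index has both neighbors in $[n]$) are nice clarifications, but the underlying argument is the same as the paper's.
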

\begin{proof}
Suppose that $\pi\in F_n$ for odd $n$.  The dominating set $\Gamma(\pi)$ consists of the odd-labeled vertices along the path.  For each even $i\in [n]$, one of $i-1$ and $i+1$ must be in the dominating set by the time that $i$ is revealed as otherwise $i$ would be added to the dominating set. Moreover, requiring that one of $i-1$ and $i+1$ appear before $i$ in the permutation is clearly sufficient to ensure that $i$ is not in the dominating set and that every odd vertex is dominated. Now consider $\pi^{-1}$. We claim that $\pi^{-1}$ is weakly alternating. If some even $i\in [n]$ appears at index $k$ in $\pi$ we have either $i-1$ or $i+1$ at some index $j<k$. Without loss of generality suppose that $\pi(j)=i-1$. Then $\pi^{-1}(i-1)=j$ and $\pi^{-1}(i)=k$, and so $\pi^{-1}$ does have a weak peak at every even $i$. Now suppose that $\sigma$ is a weakly alternating permutation. Suppose without loss of generality that $\sigma(i)=k$ and $\sigma(i-1)=j<k$, then $\sigma^{-1}(j)=i-1$ appears before $\sigma^{-1}(k)=i$. Thus the map that sends $\pi\rightarrow \pi^{-1}$ is a bijection between $F_n$ and $W_n$ for odd $n$.
\end{proof}

To enumerate $W_n$ for $n$ odd, recall that the complement $c(\pi) \in S_n$ of a permutation $\pi\in S_n$ is defined by $c(\pi)_i=n+1-\pi_i$ \cite{kitaev2011patterns}. The complement of a weakly alternating permutation with weak peaks at even indices is a permutation with no local maxima in even positions. Permutations of odd length with no local maxima in even positions are enumerated in OEIS sequence \href{https://oeis.org/A113583}{A113583}.

\subsection{Connection with Path Coloring}

Permutations with no local maxima in even positions have been studied previously in relationship to greedy graph colorings of the path. With some predetermined ordering on the vertices, the greedy algorithm for coloring a path labels each vertex, in order, with the lowest number so that no two adjacent vertices receive the same label. In particular, suppose that we color the vertices in the order that they appear in $\pi$. Then the number of permutations in which the odd vertices are colored $1$ and the even vertices are colored $2$ is equal to the number of permutations of odd length with no even local maxima \cite{bouwer1988question}. However, the form of the extremal permutations differs from the graph coloring problem when the length of the path being colored is even. In particular, let $D(n)$ be the number of permutations in $S_n$ which lead to all of the odd vertices of $P_n$ being placed in the dominating set. Gessel discovered in \cite{gessel1991coloring} that if $G(x)$ is the odd part of the exponential generating function ($G(x)=D(1)x/1!+D(3)x^3/3!+\cdots),$

\begin{equation}
G(x)=\frac{\sinh x}{\cosh x-x\sinh x}
\end{equation}
and also that if $H(x)=D(0)/0!+D(2)x^2/2!+D(4)x^4/4!+\cdots$, then
\begin{equation}
H(x)=\frac{1}{\cosh x-x\sinh x}.
\end{equation}
These formulas were studied prior to the work of Gessel in a different form in \cite{bouwer1988question}.

\subsection{Enumerating $F_{2n}$}

We now turn to the remaining problem of enumerating the worst-case permutations of even length. In particular we establish the following.

\begin{prop}
Let $f(n) = |F_n|$ and define $f(0) = 1$.  With values of $f(n)$ for $n$ odd as in Proposition \ref{prop:oddrecurrence}, when $n$ is even, $f(n) = f(2k)$ satisfies the recurrence relation

\[ f(n) = 2(n-1)f(n-2) + (n-1)(n-2) \sum_{i=2}^{n-1} \binom{n-3}{i-2} f(i-2)f(n-i-1) \]

or equivalently,

\[f(2k) = (4k-2)f(2k-2) + (4k^2-6k+2) \sum_{i=2}^{2k-1} \binom{2k-3}{i-2} f(i-2)f(2k-i-1) \]
\end{prop}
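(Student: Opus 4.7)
The plan is to parallel the proof of Proposition \ref{prop:oddrecurrence} by conditioning on the first revealed vertex. Set $f_i(n) = |\{\pi \in F_n : \pi(1) = i\}|$, so $f(n) = \sum_{i=1}^n f_i(n)$. Once vertex $i$ is revealed it joins the dominating set, its neighbors $i-1$ and $i+1$ (when present) become dominated and will never be added to $D$ regardless of when they are revealed later, and the undominated portion of $P_n$ splits into the two independent sub-paths on $\{1,\ldots,i-2\}$ and $\{i+2,\ldots,n\}$ of lengths $i-2$ and $n-i-1$. The algorithm's action on each sub-path depends only on the relative order of that sub-path's vertices in $\pi$ and is independent of the other sub-path, since no edges of $P_n$ connect them.

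I first handle the endpoint cases $i = 1$ and $i = n = 2k$. Here the remaining sub-path has length $2k-2$, and the only constraint from $\pi \in F_{2k}$ is that the induced ordering on that sub-path lies in $F_{2k-2}$; the lone neighbor of the endpoint (vertex $2$ or $2k-1$) can be inserted into any of the $2k-1$ positions after position $1$, since it is already dominated. This gives a combined contribution of $2(2k-1)f(2k-2) = (4k-2)f(2k-2)$.

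For interior $i$ with $2 \le i \le 2k-1$ the main point, and where the analysis differs from the odd case, is that
\[
\left\lceil \frac{i-2}{2} \right\rceil + \left\lceil \frac{2k-i-1}{2} \right\rceil = k-1
\]
for every such $i$ regardless of parity: since $(i-2)+(2k-i-1) = 2k-3$ is odd, exactly one of $i-2, 2k-i-1$ is even and the other odd, and a direct two-case check gives the identity. Since at most $k$ vertices can lie in the dominating set of $P_{2k}$, and the two sub-paths together can contribute at most $k-1$ beyond vertex $i$, achieving $\pi \in F_{2k}$ forces the induced orderings on the two sub-paths to lie in $F_{i-2}$ and $F_{2k-i-1}$ respectively (with the convention $f(0) = 1$).

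It then remains to count $f_i(2k)$ for interior $i$: place the two neighbors $i-1$ and $i+1$ into two of the $2k-1$ positions after position $1$ in $(2k-1)(2k-2)$ ordered ways; among the remaining $2k-3$ positions choose $\binom{2k-3}{i-2}$ sites for the left sub-path vertices; and order each sub-path in a worst-case manner in $f(i-2)\,f(2k-i-1)$ ways. Summing over $i \in \{2,\ldots,2k-1\}$ and adding the endpoint contribution yields the stated recurrence. The main subtlety is the ceiling identity above: it is precisely what lets both odd and even interior starting vertices contribute, in contrast to Proposition \ref{prop:oddrecurrence} where starting at an even vertex strictly loses one from the maximum.
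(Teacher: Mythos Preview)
Your proof is correct and follows the same approach as the paper, which simply remarks that the argument is analogous to the odd case except that interior choices of $i$ split the path into one odd-length and one even-length piece and that the even-$i$ cases cannot be discarded. Your explicit ceiling identity $\lceil (i-2)/2\rceil + \lceil (2k-i-1)/2\rceil = k-1$ makes precise what the paper only hints at with ``by symmetry,'' and the remaining counting is identical.
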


The proof is similar to the case that $n$ is odd, except that choosing a vertex other than the first or last splits the path into an odd-length path and an even-length path, which must be dominated separately, and we cannot throw out the cases in which $i$ is even; in fact, by symmetry, the procedure is the same as when $i$ is odd).

\begin{thm}\label{thm:genfunc}
Let $F_{n}$ be the set permutations which achieve the upper bound for the number of vertices in the dominating set when the online algorithm is run on the path. Let $F(x)$ be the exponential generating function for $\abs{F_n}$.  Then
\begin{equation}
F(x)=\frac{\sinh x}{\cosh x-x\sinh x}+\frac{1}{(\cosh x-x\sinh x)^2}.
\end{equation}
\end{thm}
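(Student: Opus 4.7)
The plan is to split $F(x) = G(x) + E(x)$ into its odd and even parts and handle each separately. By Proposition \ref{prop:oddcase}, the odd coefficients $f(2k+1)$ agree with Gessel's numbers $D(2k+1)$ from the path-coloring correspondence, so the odd part is already identified as $G(x) = \frac{\sinh x}{\cosh x - x\sinh x}$. It therefore suffices to show that the even part satisfies $E(x) = H(x)^2$ with $H(x) = \frac{1}{\cosh x - x\sinh x}$.

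For this I would work directly from the combinatorial classification in Proposition \ref{prop:maxindsets}. Every worst-case dominating set on $P_n$ for even $n$ has the form
\[
S_a := \{1, 3, \ldots, a-1\} \cup \{a+2, a+4, \ldots, n\}
\]
for some even $a$ with $0 \leq a \leq n$, where the extremes $a=0$ and $a=n$ recover the all-even and all-odd sets respectively. Hence
\[
f(n) = \sum_{\substack{0 \leq a \leq n \\ a \text{ even}}} N_a(n), \qquad N_a(n) := |\{\pi \in S_n : \Gamma(\pi) = S_a\}|.
\]

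The central claim to establish is $N_a(n) = \binom{n}{a} D(a) D(n-a)$, with the convention $D(0) = 1$. The argument has two ingredients. First, the conditions on $\pi$ that force $\Gamma(\pi) = S_a$ decouple across the cut between vertex $a$ and vertex $a+1$: each $v \in S_a$ has no neighbor in $S_a$ and is therefore added the moment it is revealed, regardless of what has happened on the other side of the cut, while each vertex outside $S_a$ needs only some neighbor in $S_a$ to have appeared before it in $\pi$. A case check at the interface shows that $a$ can only be dominated by $a-1 \in S_a$ (since its other neighbor $a+1 \notin S_a$) and $a+1$ can only be dominated by $a+2 \in S_a$, so no required ``prior appearance'' relation ever crosses the cut. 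Second, once the constraints decouple, the restriction of $\pi$ to $\{1,\ldots,a\}$ must realize the all-odd dominating set on $P_a$, counted by $D(a)$, and the restriction to $\{a+1,\ldots,n\}$ must realize the all-even dominating set on a copy of $P_{n-a}$; by the reflection symmetry $i \mapsto (n-a)+1-i$ of $P_{n-a}$, this count is also $D(n-a)$. The factor $\binom{n}{a}$ accounts for the free interleaving of the two sub-permutations.

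Summing the resulting identity over even $n$ exhibits $E(x)$ as the Cauchy product of the exponential generating function of $\{D(2k)\}_{k\geq 0}$ with itself, so $E(x) = H(x)^2$, and then $F(x) = G(x) + E(x)$ gives the stated formula. The main obstacle is the decoupling step: the independence of the two halves is geometrically natural but must be verified by a careful local check at the boundary vertices $a$ and $a+1$, because these form the unique consecutive pair of non-dominating vertices and are the only places where a constraint could in principle straddle the cut.
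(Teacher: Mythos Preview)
Your proposal is correct and follows essentially the same route as the paper: both split $F(x)$ into its odd and even parts, invoke Proposition~\ref{prop:oddcase} together with Gessel's formula for the odd part, and for the even part establish the convolution identity $\abs{F_{2n}}=\sum_{i=0}^{n}\binom{2n}{2i}D(2i)D(2n-2i)$ by cutting each worst-case configuration at its unique consecutive non-dominating pair and showing the two halves decouple. Your boundary check at $a$ and $a{+}1$ is exactly the paper's observation that these two vertices are governed solely by $a{-}1$ and $a{+}2$, respectively, so no constraint crosses the cut.
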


One immediate advantage of this view is that the coefficients of the exponential generating function are exactly the probabilities that a randomly chosen permutation in $S_n$ gives rise to a maximal dominating set by this online algorithm.

\begin{proof}
We have already established that the first term in the sum corresponds to the odd powers of $x$ by our remarks in Proposition \ref{prop:oddcase}, and so the even case is all that remains. To finish the proof we show combinatorially that
\[
\abs{F_{2n}}=\sum_{i=0}^n D(2n-2i)D(2i){2n\choose 2i},
\]
a convolution which gives us the second term in the sum to account for the even powers of $x$.

Suppose that $\pi$ is a permutation in $F_{2n}$. If all of the vertices in $\Gamma(\pi)$ have the same parity, then we have that either $\pi$ is a permutation counted by $D(2n)$ or the reverse of $\pi$ is a permutation counted by $D(2n)$. This deals with the cases in the above equation where $i$ is $0$ or $n$. Otherwise, from Proposition \ref{prop:maxindsets} we know that $\Gamma(\pi)$ has the form $ \{1,3,5,...,2j-1,2j+2,2j+4,...,2n\}$. Now let $\pi'$ be the permutation $\pi$ restricted to the set $[2j]$ and let $\pi''$ be the permutation restricted to the remaining elements. By the form of $\Gamma(\pi)$ we have that $\pi'$ is counted by $D(2j)$. Subtracting $2j$ from the elements of $\pi''$ and reversing their order provides similar inclusion in $D(2n-2j)$. Moreover, consider taking two permutations $\sigma$ counted by $D(2n-2j)$ and $\tau$ counted by $D(2j)$ for $0<j<n$. Reverse the entries of $\tau$, add $2n-2j$ to each entry and now place the entries of this new permutation among the entries of $\sigma$ to create a permutation $\pi$. This map is clearly an injection and moreover there are ${2n\choose 2j}$ ways of merging the two permutations as each way of merging them is selected by the choice of $2i$ entries which the image of $\tau$ occupies.  For example, if we take $\sigma=534216$ and $\tau=135642$ then $\tau\mapsto 246531\mapsto 8(10)(12)(11)97$. We can then join these permutations, for instance, as
\[
\pi=58(10)342(12)(11)1697\text{ and } \Gamma(\pi)=\{1,3,5,8,10,12\}.
\]
The vertices $2j$ and $2j+1$ are never placed into the dominating set by this procedure as they never appear before $2j-1$ or $2j+2$, respectively, in $\pi$. Then as neither of $2j$ and $2j+1$ are in the dominating sets, the vertices less than $2j$ are effectively dominated independently from the vertices greater than $2j+1$.
\end{proof}

The proofs of the formulas for $G(x)$ and $H(x)$ given in \cite{gessel1991coloring} are purely combinatorial, and so as the above proof is also combinatorial, Theorem \ref{thm:genfunc} has been proven by purely combinatorial means.

\section{Best Case Permutations}

In this section we enumerate the permutations for which the algorithm gives the optimal result. Given a permutation $\pi\in S_n$ with corresponding dominating set $\Gamma(\pi)$, each vertex of $\Gamma(\pi)$ dominates at most three elements of $P_n$, and thus the minimum size of a dominating set is $\lceil n/3\rceil$. Dominating sets achieving this bound are easy to construct: pick all of the vertices with a label congruent to $2$ modulo $3$ as well as the final vertex. We enumerate these permutations in three different cases. In particular, the resulting formula depends strongly on the congruence class of $n$ modulo $3$. Let $B_n$ be the set of permutations of length $n$ which achieve the lower bound on the size of the dominating set.  

Throughout this section, we will lean heavily on the following idea:  when we run the online algorithm on a graph $G$ and hope to produce a best-case independent dominating set, if at some point have an independent (not yet dominating) set $D'$, in order to minimally dominate $G$ moving forward we must minimally dominate all of the components of $G \smallsetminus H$, where $H$ is the graph induced by the neighborhood of $D'$.  In other words, a partial independent dominating set breaks the graph into components each of which can be considered independently of the others.

\begin{prop}
For $n\equiv 0 \bmod 3$, we have that
\begin{equation}
\abs{B_n}=\frac{n!}{3^{n/3}}.
\end{equation}
\end{prop}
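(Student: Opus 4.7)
The plan is to first show that when $n=3k$, the target dominating set is forced to be $S_0=\{2,5,8,\dots,3k-1\}$, and then to translate the event $\Gamma(\pi)=S_0$ into a simple statement about relative orders of consecutive triples of vertices.

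\textbf{Step 1: Uniqueness of the minimum dominating set.} Any dominating vertex of $P_n$ covers at most three vertices (itself and its two neighbors), so a dominating set of size $k=n/3$ must cover each vertex exactly once; no two closed neighborhoods can overlap. I would use this to show that the dominating set must consist of an arithmetic progression with common difference $3$, and then check the three possible progressions $\{1,4,\dots,3k-2\}$, $\{2,5,\dots,3k-1\}$, $\{3,6,\dots,3k\}$; only the middle one dominates both endpoints, so $S_0=\{2,5,\dots,3k-1\}$ is the unique minimum dominating set.

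\textbf{Step 2: Combinatorial characterization.} Partition $[n]$ into the blocks $B_j=\{3j-2,3j-1,3j\}$ for $j=1,\dots,k$. I claim $\pi\in B_n$ if and only if for every $j$, the vertex $3j-1$ precedes both $3j-2$ and $3j$ in $\pi$. For the forward direction, note that $3j-1\in \Gamma(\pi)$ forces $3j-1$ to be revealed before either of its neighbors enters the dominating set, and its only possible dominating neighbors are $3j-2$ and $3j$ (neither $3j-3$ nor $3j+1$ lies in $S_0$), so $3j-1$ must precede them. For the converse, if $3j-1$ precedes $3j-2$ and $3j$ in every block, then at the moment $3j-1$ is revealed neither neighbor has yet been placed in $\Gamma$, so $3j-1$ joins $\Gamma$; subsequently $3j-2$ and $3j$ are already dominated by $3j-1$ and do not join $\Gamma$. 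This gives exactly $\Gamma(\pi)=S_0$, so $\pi\in B_n$.

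\textbf{Step 3: Counting.} Since the blocks $B_1,\dots,B_k$ are disjoint and their union is $[n]$, the relative orders of $\pi$ restricted to the different blocks are mutually independent under a uniformly random $\pi\in S_n$. The probability that $3j-1$ is the first element of its block to appear is $1/3$, and these $k$ events are independent, so
\[
\Pr(\pi\in B_n)=\left(\tfrac{1}{3}\right)^{k}=3^{-n/3},
\]
giving $|B_n|=n!\,/\,3^{n/3}$.

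\textbf{Main obstacle.} The only delicate point is Step 1: establishing that the minimum dominating set is unique when $3\mid n$. Once that is in hand, Steps 2 and 3 are essentially bookkeeping; the independence of relative orders on disjoint subsets of a uniformly random permutation is a standard fact and requires no extra work.
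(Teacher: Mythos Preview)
Your proposal is correct and follows essentially the same route as the paper: first argue that the unique minimum dominating set when $3\mid n$ is $\{2,5,\dots,n-1\}$, then observe that $\pi\in B_n$ iff in each triple $\{3j-2,3j-1,3j\}$ the middle element appears first, and conclude by multiplying the independent probabilities $1/3$ over the $n/3$ blocks. Your write-up is simply a more detailed version of the paper's terse argument; the only spot to tighten is the forward direction of Step~2, where the constraint really comes from requiring $3j-2$ and $3j$ to \emph{not} enter $\Gamma$ (their other neighbors $3j-3,3j+1$ lie outside $S_0$), rather than from $3j-1$ entering it.
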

\begin{proof}
Note that for any dominating set $\Gamma(\pi)$ for $\pi\in B_n$ we have that $\Gamma(\pi)=\{2,5,8,...,n-1\}$ as each vertex is dominated exactly once. 
Choose a permutation $\pi \in S_n$ uniformly at random.  The probability that $\pi \in B_n$ is exactly the probability that $3k+2$ appears before both $3k+1$ and $3k+3$ for each $k \in \{0, \ldots, \frac{n}{3}-1\}$; this is exactly $\frac{1}{3^{n/3}}$, and the result follows by linearity of expectation.

\end{proof}

We next consider permutations of $n$ for which $n\equiv 2 \mod 3$.
\begin{prop}\label{prop:nequiv2mod3}
For $n\equiv 2\bmod 3$, we have that for $n>2$,
\[
\abs{B_n}=24{n\choose 5} \frac{(n-5)!}{3^{(n-5)/3}}\pez{\frac{n-2}{3}}+2{n\choose 2}\frac{(n-2)!}{3^{(n-2)/3}}.
\]
\end{prop}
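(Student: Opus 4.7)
The plan is to enumerate the minimum independent dominating sets of $P_n$ for $n = 3k+2$ and, for each such set $S$, count the permutations $\pi$ with $\Gamma(\pi) = S$. Since a minimum dominating set has size $k+1 = (n+1)/3$ and each dominator dominates at most three vertices, at most one domination is ``wasted.'' A short case analysis on the leftmost and rightmost dominators and on the gap sizes between consecutive dominators shows that the minimum independent dominating sets are the two ``extreme'' sets $\{1,4,\dots,3k+1\}$ and $\{2,5,\dots,3k+2\}$, together with $k$ ``shifted'' sets $S_j = \{2,5,\dots,3j-1,3j+1,3j+4,\dots,3k+1\}$ for $j \in \{1,\dots,k\}$, in which the two middle dominators $3j-1$ and $3j+1$ both dominate vertex $3j$.

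Next I would use the characterization that $\Gamma(\pi) = S$ holds if and only if, for every non-dominator $v$, some dominator-neighbor of $v$ appears before $v$ in $\pi$ (the dominators themselves are automatically admissible because $S$ is independent). For the two extreme sets these constraints split over $k-1$ disjoint triples $\{3i-2, 3i-1, 3i\}$ (each contributing the independent probability $\tfrac{1}{3}$ that the dominator appears first) plus one boundary pair contributing probability $\tfrac{1}{2}$, giving $\tfrac{n!}{2 \cdot 3^k}$ per set. The two extreme sets therefore together account for $\tfrac{n!}{3^k}$ permutations, matching the second summand $2\binom{n}{2}(n-2)!/3^k$.

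For each shifted set $S_j$ the constraints decompose over $k-1$ ordinary triples plus a single five-vertex middle block $M = \{3j-2, 3j-1, 3j, 3j+1, 3j+2\}$, on which we require that $3j-1$ precede $3j-2$, that $3j+1$ precede $3j+2$, and that at least one of $3j-1, 3j+1$ precede $3j$. Counting valid orderings of $M$ by inclusion--exclusion, the two endpoint conditions alone give $5!/4 = 30$ orderings; the bad orderings additionally have $3j$ preceding both $3j-1$ and $3j+1$, and these are precisely the linear extensions of a poset consisting of two chains meeting at their common minimum --- $3j$ comes first, then the two chains $3j-1, 3j-2$ and $3j+1, 3j+2$ can be interleaved in $\binom{4}{2} = 6$ ways. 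This leaves $30 - 6 = 24$ valid orderings of $M$. Since choices across disjoint blocks are independent, each $S_j$ contributes $n! \cdot \tfrac{24}{120} \cdot \tfrac{1}{3^{k-1}} = \tfrac{n!}{5 \cdot 3^{k-1}}$ permutations, and as this is independent of $j$ the $k$ shifted sets jointly contribute $\tfrac{k \cdot n!}{5 \cdot 3^{k-1}}$. Rewriting via $\binom{n}{5}(n-5)! = n!/120$ and $k = (n-2)/3$ matches the first summand.

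The main obstacle is the combinatorics of the 5-block: everything else is bookkeeping, but one should carefully verify that the constraints really do factor over the disjoint blocks. This is justified by the fact that a uniformly random $\pi \in S_n$ induces, on any partition of the vertex set into disjoint blocks, mutually independent uniformly random orderings on the blocks, so the probability of the joint event factors as a product over blocks.
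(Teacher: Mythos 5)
Your proposal is correct and follows essentially the same route as the paper: the same case split according to whether an endpoint of the path is a dominator, the same factorization of the precedence constraints over disjoint domination blocks, the same count of $24$ admissible orderings of the five-vertex block around the doubly dominated vertex, and the same $(n-2)/3$ choices for its location. The only blemish is a slip in the prose for the two extreme sets, where you write ``$k-1$ disjoint triples'' although the partition of $[n]$ into one pair and triples requires $k$ triples; your stated count $n!/(2\cdot 3^k)$ already corresponds to the correct $k$, so the final formula is unaffected.
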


\begin{proof}
We consider two cases separately; either $\Gamma(\pi)$ contains one of $1$ or $n$ or it does not. In the case that $\Gamma(\pi)$ contains $1$ then $\pi$ restricted to $\{3,...,n\}$ is dominated minimally. In particular, this set contains a number of vertices congruent to $0$ modulo $3$ so the permutation $\pi$ when restricted to $\{3,...,n\}$ is of the type in $B_{n-2}$. We must place the entries $1$ and $2$ in order within such a permutation, thus there are
\[
{n\choose 2}\frac{(n-2)!}{3^{(n-2)/3}}
\]
such permutations. The analysis when $n\in \Gamma(\pi)$ is similar. 

Next suppose that neither $1$ nor $n$ are in the dominating set. We claim that there exists a unique vertex which is dominated by two vertices. Each vertex dominates three vertices, and as there are $\lceil n/3\rceil $ members of the dominating set and this number is greater than $n/3$ by exactly $1/3$, there must be a unique vertex that is dominated by both of its neighbors. We now count permutations $\pi\in B_n$ which have a unique pair of indices $i,i+2$ in their dominating set. For such a permutation, note that if we restrict $\pi$ to $\{1,...,i-2\}$ and $\{i+4,...,n\}$, each of these sets must contain a number of vertices congruent to $0$ modulo $3$ as otherwise the remaining vertices in the dominating set cannot possible dominate the rest of the graph. We can count the number of ways to dominate the vertices $\{1,...,i-2\}$ and $\{i+4,...,n\}$ as in the previous proposition.  There are $24$ permutations $\sigma$ of $\{i-1,i,i+1,i+2,i+3\}$ such that $\Gamma(\sigma) = \{i, i+2\}$, and there are ${n\choose 5}$ ways of combining $\sigma$ and the permutation defined on the remaining vertices. There are $(n-2)/3$ ways to choose $i$. Multiplying all of these choices together then gives the result.
\end{proof}

Finally we consider the hardest case, $n\equiv 1\mod 3$. In the previous two cases, the permutations were highly structured, as there was at most one vertex dominated by both of its neighbors.  This case is more complicated because this restriction is slightly loosened.

\begin{prop}
For $n\equiv 1 \bmod 3$, we have that for $n>7$,
\begin{align*}
\abs{B_n}&=720{n\choose 7}\frac{n-4}{3}\frac{(n-7)!}{3^{(n-7)/3}}+24^2{10\choose 5}{n\choose 10}{(n-4)/3\choose 2}\frac{(n-10)!}{3^{(n-10)/3}}+6{n\choose 4}\frac{(n-4)!}{3^{(n-4)/3}}\\
&+2\pez{9{n\choose 4}\frac{(n-4)!}{3^{(n-4)/3}}+{24{n\choose 2} {n-2\choose 5} \frac{(n-7)!}{3^{(n-7)/3}}\frac{n-4}{3}}}.
\end{align*}
\end{prop}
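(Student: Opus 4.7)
My plan is to classify all size-$(n+2)/3$ independent dominating sets $\Gamma^*$ of $P_n$ and, for each, enumerate the permutations that produce it. Since $\Gamma^*$ is independent, a permutation $\pi$ produces $\Gamma^*$ if and only if for every $v\notin\Gamma^*$ some neighbor of $v$ in $\Gamma^*$ appears before $v$ in $\pi$. Counting closed neighborhoods yields the identity $3|\Gamma^*|-n=(\#\text{ endpoints in }\Gamma^*)+(\#\text{ double-dominated vertices})$, so $|\Gamma^*|=(n+2)/3$ forces exactly two units of ``slack.'' This produces three cases: (A) both endpoints $1,n\in\Gamma^*$ and no double-dominated vertex; (B) exactly one endpoint in $\Gamma^*$ together with one double-dominated vertex; (C) neither endpoint in $\Gamma^*$ and two double-dominated vertices.

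For each admissible $\Gamma^*$, the path decomposes into disjoint blocks whose constraints involve disjoint vertex sets, so the number of producing permutations factors as a multinomial coefficient selecting time slots for the blocks, the number of valid orderings inside each block, and a factor $1/3^k$ for the $k$ clean triples $\{3j,3j+1,3j+2\}$ with middle dominator (each of probability $1/3$). The non-triple blocks that arise are: the $2$-vertex endpoint block (one valid ordering out of two); the $4$-vertex endpoint-overlap block $\{1,2,3,4\}$ with $\Gamma^*\cap\{1,2,3,4\}=\{1,3\}$, admitting $9$ valid orderings (those with $\text{pos}(3)<\text{pos}(4)$ and $\min(\text{pos}(1),\text{pos}(3))<\text{pos}(2)$); the $5$-vertex internal-overlap block already analyzed in Proposition~\ref{prop:nequiv2mod3} (with $24$ valid orderings); and the $7$-vertex double-overlap block $\{i-1,\ldots,i+5\}$ with dominators $i,i+2,i+4$ (with $720$ valid orderings).

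I would then carry out the cases in order. Case (A) has the unique dominating set $\{1,4,7,\dots,n-3,n\}$ and contributes $n!/(4\cdot 3^{(n-4)/3})=6\binom{n}{4}(n-4)!/3^{(n-4)/3}$, the third summand. Case (B), doubled by left--right symmetry, splits into the subcase where the overlap is adjacent to the endpoint block (a single $4$-block plus $(n-4)/3$ clean triples, giving $9\binom{n}{4}(n-4)!/3^{(n-4)/3}$) and the subcase where the overlap is internal (a $2$-block, a $5$-block, and $(n-7)/3$ clean triples, summed over $(n-4)/3$ possible overlap positions, yielding $24\binom{n}{2}\binom{n-2}{5}(n-7)!/3^{(n-7)/3}\cdot(n-4)/3$). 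Case (C) splits into two adjacent overlaps (a single $7$-block in one of $(n-4)/3$ positions) and two non-adjacent overlaps (two separate $5$-blocks in one of $\binom{(n-4)/3}{2}$ pairs of positions, with a $\binom{10}{5}$ factor interleaving the associated ten time slots). Summing the five contributions reproduces the claimed formula.

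The main technical obstacle is verifying the count $720=6!$ for the double-overlap $7$-block, i.e.\ that running the algorithm on an isolated $P_7$ with target dominating set $\{2,4,6\}$ succeeds with probability exactly $1/7$. This fits the clean pattern that the length-$(2m+1)$ block with dominators $\{2,4,\ldots,2m\}$ has $(2m)!$ valid orderings, the $m=1$ and $m=2$ cases recovering the clean triple and the $5$-block of Proposition~\ref{prop:nequiv2mod3}. For $m=3$ I would establish the count by inclusion--exclusion on the four constraints $\text{pos}(2)<\text{pos}(1)$, $\min(\text{pos}(2),\text{pos}(4))<\text{pos}(3)$, $\min(\text{pos}(4),\text{pos}(6))<\text{pos}(5)$, $\text{pos}(6)<\text{pos}(7)$, or alternatively by conditioning on which of the seven elements of the block is last in the permutation and summing the resulting reduced problems. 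The hypothesis $n>7$ is used only to guarantee $(n-4)/3\ge 2$, so that both subcases of (C) are populated.
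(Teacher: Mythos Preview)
Your proposal is correct and follows essentially the same case decomposition as the paper: both arguments split according to how many of the endpoints $1,n$ lie in $\Gamma(\pi)$ and, within each case, according to whether the doubly-dominated vertices are adjacent or not, arriving at the same five summands via the same block counts ($6$, $9$, $24$, $720$). Your block/probability framing and the observation that the $(2m{+}1)$-block has $(2m)!$ valid orderings are a tidier packaging of what the paper does by direct multinomial interleaving, and your proposed inclusion--exclusion verification of the $720$ count is more principled than the paper's ``brute force,'' but the underlying argument is the same.
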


\begin{proof}
We enumerate different subsets of $B_n$ separately, with the subsets essentially defined in terms of how each member dominates the end vertices. We begin by considering those $\pi\in B_n$ such that neither $1$ nor $n$ is contained in $\Gamma(\pi)$. Let $k$ be such that $n = 3k+1$.  As each vertex of the dominating set dominates three vertices (including itself) and there are $k+1$ vertices in the dominating set, there exist exactly two vertices $i, j \notin \Gamma(\pi)$ that are dominated by two other vertices. We further separate into two cases, depending on whether or not $|i-j| = 2$.

Suppose first that (without loss of generality) $j-i = 2$, then the dominating set contains the three vertices $i-1,i+1,i+3$. Moreover, note that if we restrict $\pi$ to $\{1,...,i-3\}$ and $\{i+5,...,n\}$ that there are $n-7$ vertices left to dominate with $k - 2$ vertices remaining to put into the dominating set. In particular, this shows that we must have a multiple of three vertices in each of these blocks, and also, because we have so few remaining vertices, the dominating set is completely determined. Moreover, computing by brute force gives that there are $720$ ways to dominate the vertices $i-1,...,i+5$ with $i, i+2,$ and $i+4$. Then there are $k-1 = (n-4)/3$ possible values that $i$ may take, so we have the first term in the formula.

Next suppose that $|j-i| \neq 2$. Then there are exactly two pairs $i-1,i+1$ and $j-1,j+1$ in $\Gamma(\pi)$, and they are disjoint. There are $24$ ways to minimally dominate each block $\{i-2,...,i+2\}$ (necessarily including $i-1$ and $i+1$ in the dominating set) individually. Moreover, by counting the number of vertices that need to be dominated outside of the blocks we know that each of the sets $\{1,...,i-3\}, \{i+3,...,j-3\}$ and $\{j+3,...,n\}$ has size a multiple of three. There are $\binom{k-1}{2}$ ways to pick $i$ and $j$. Then multiplying all of this information together as before gives the second term in the expression.

Now suppose that both $1$ and $n$ are contained within the dominating set. Then $\pi$ restricted to $\{3,...,n-3\}$ must dominate $n-4$ vertices with $k-1$ vertices, so we may treat these remaining vertices as though they belong to the case where $n\equiv 0\bmod 3$. Note that there are $6$ ways to dominate $\{1,2,n-1,n\}$ with 1 and $n$ so joining these with a permutation that dominates $\{3,...,n-3\}$ gives the second to third term in the equation.

Next suppose that $1$ is in the dominating set but that $n$ is not (the case in which $n$ is in the dominating set but $1$ is not is analogous, so we multiply the relevant terms by $2$). Consider the subcase in which 3 is also in the dominating set. Restricting to the vertices $\{5,...,n\}$, we have $n-4$ vertices to dominate and $k - 1=(n-4)/3$ vertices to add to the dominating set. Thus we may consider $\pi$ restricted to $\{5,...,n\}$ as being a permutation in $B_{n-4}$. There are $9$ ways to put $1$ and $3$ but not $2$ and $4$ in the dominating set. Thus combining as before we obtain the first summand in the final term.

Finally, suppose that $1$ is in the dominating set but 3 and $n$ are not. In this case we consider the restriction of $\pi$ to $\{3,...,n\}$ and identify this with a permutation in $B_{n-2}$. Each such permutation arises by combining a permutation in $B_{n-2}$ which does not include $1$ or $n-2$ in its dominating set with the permutation $12 \in S_2$. From our analysis in Proposition \ref{prop:nequiv2mod3}, we know such permutations have a pair of entries in their dominating set $i,i+2$. Selecting that pair of entries and enumerating as before gives us the second summand within the parentheses. As these different cases are exhaustive, we have finished the proof.

\end{proof}

For completeness, we also include the following table, which gives the values of $\abs{B_n}$ for small $n$.


\[
\begin{tabular}{|c|c|}
\hline $n$& $\abs{B_n}$\\ \hline
1 & 1\\
2& 2\\
3&2\\
4& 24\\
5& 64\\
6&80\\
7&3408\\
8& 9856\\
9& 13440\\
10 & 1377792\\
11 & 4139520\\ \hline
\end{tabular}
\]

\section{Further Questions}
There are a number of questions which arise from study of this random online algorithm. One obvious question concerns the enumeration of other types of permutations with respect to the number of vertices they will cause to lie in the dominating set. 

\begin{figure}[h!]
  \caption{$\gamma(\pi)$ plotted for $40000$ randomly chosen permutations $\pi$ in $S_{2000}$. }
  \centering
    \label{fig5}\includegraphics[width=0.5\textwidth]{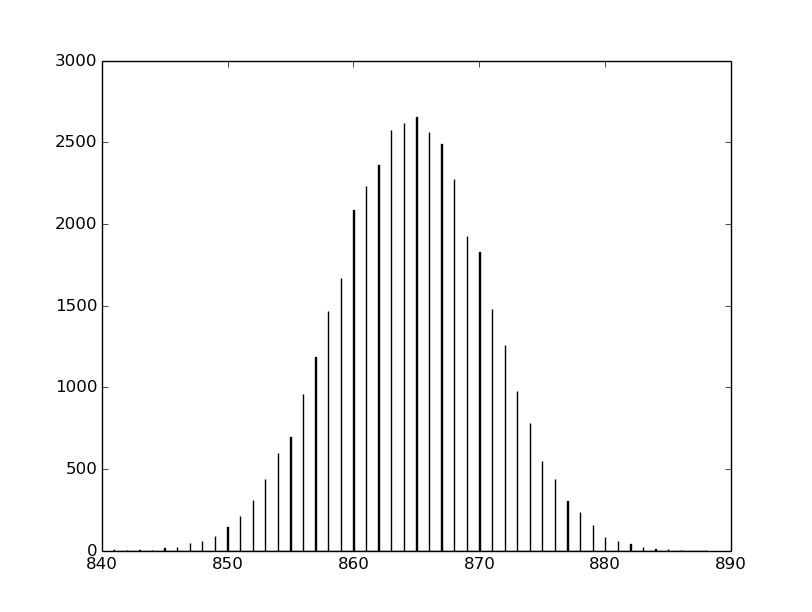}
\end{figure}

There is another question which is more closely related to the probabilistic section of the paper. Given the distribution of $S_n$ under the map $\gamma$ on the line, one can appropriately normalize and then ask whether this distribution is converging to some limit distribution. We have done some tests of this and the results seem encouraging, cf. Figure \ref{fig5}. Note that because $\gamma(\pi)$ for $\pi\in S_n$ is between about $n/2$ and $n/3$ that normalizing by $n$ and translating will result in a limiting distribution with compact support, which will not be normal.

\section{Acknowledgments}
We thank Anant Godbole for his suggestion of the problem. We also thank Curtis Greene for suggesting that we consider best and worst case permutations.

\bibliographystyle{plain}

\bibliography{online}

\end{document}